\newtheorem{theorem}{Theorem}[section]
\newtheorem{proposition}[theorem]{Proposition}
\newtheorem{lemma}[theorem]{Lemma}
\theoremstyle{definition}
\newtheorem{definition}[theorem]{Definition}
\newtheorem{example}[theorem]{Example}
\newtheorem{proposition-definition}[theorem]{Proposition-Definition}
\newtheorem{definition-theorem}[theorem]{Definition-Theorem}
\newtheorem{corollary}[theorem]{Corollary}
\theoremstyle{remark}
\newtheorem{remark}[theorem]{Remark}
\numberwithin{equation}{section}
\def\mod{\opname{mod}\nolimits}
\newcommand{\opname}[1]{\operatorname{\mathsf{#1}}}
\newcommand{\Hom}{\opname{Hom}}
\newcommand{\End}{\opname{End}}
\newcommand{\Ext}{\opname{Ext}}
\newcommand{\Fac}{\opname{Fac}}
\newcommand{\add}{\opname{add}\nolimits}
\begin{document}

% \title[short text for running head]{full title}
\title[Modules determined by their Newton polytopes]{Modules determined by their Newton polytopes}

%    Only \author and \address are required; other information is
%    optional.  Remove any unused author tags.

%    author one information
% \author[short version for running head]{name for top of paper}
\author{Peigen Cao}
\address{School of Mathematical Sciences, University of Science and Technology of China, Hefei, 230026, People's Republic of China
}
\email{peigencao@126.com}

%\dedicatory{Dedicated to Professor ...}

\dedicatory{}

%    \subjclass is required.
\subjclass[2020]{16G20}

\date{}

\keywords{}

%\keywords{cluster algebra, string diagrams}

%    Abstract is required.
\begin{abstract}
In the $\tau$-tilting theory, there exist two classes of foundamental modules: indecomposable $\tau$-rigid modules and left finite bricks. In this paper, we prove
the indecomposable $\tau$-rigid modules and the left finite bricks are uniquely determined by their Newton polytopes spanned by the dimensional vectors of their quotient modules. This is a kind of generalization of Gabriel's result that the indecomposable modules over path algebras of Dynkin quivers are uniquely determined by their dimensional vectors.
\end{abstract}

\maketitle

%    Text of article.

\tableofcontents

\section{Introduction}
 In 1972, Gabriel gave the foundamental result in the representation theory.
 \begin{theorem}[Gabriel's theorem]
 Let $K$ be an algebraically closed field and $A=KQ$ the path algebra of a Dynkin quiver $Q$. Then the dimensional vector $\underline{\dim}(N)$
of an indecomposable module $N$ in $\mod A$ gives a positive root in the root system $\Phi(Q)$ associated to $Q$ and 
 this correspondence gives a bijection from the indecomposable modules (up to
isomorphism) in $\mod A$ to the positive roots in $\Phi(Q)$.
 \end{theorem}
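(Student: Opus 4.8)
The plan is to run the classical argument that combines the homological interpretation of the Tits form with Bernstein--Gelfand--Ponomarev reflection functors. Write $A=KQ$, $Q_0=\{1,\dots,n\}$, and let $q_Q(x)=\sum_{i\in Q_0}x_i^2-\sum_{a\colon i\to j}x_ix_j$ be the Tits form on $\ZZ^{Q_0}$. Since $A$ is hereditary, the Euler form satisfies $\langle\underline{\dim}M,\underline{\dim}N\rangle=\dim_K\Hom_A(M,N)-\dim_K\Ext^1_A(M,N)$ for all $M,N\in\mod A$, and in particular $q_Q(\underline{\dim}M)=\dim_K\End_A(M)-\dim_K\Ext^1_A(M,M)$. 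Because $Q$ is Dynkin, $q_Q$ is positive definite, hence a root quadratic form, and $\Phi(Q)=\{\alpha\in\ZZ^{Q_0}:q_Q(\alpha)=1\}$, the positive roots being those $\alpha\ge 0$. The projectives $P_i=e_iA$ have $\End_A(P_i)\cong e_iAe_i=K$ (no oriented cycles in a tree) and $\Ext^1_A(P_i,P_i)=0$, so $q_Q(\underline{\dim}P_i)=1$; thus $\underline{\dim}P_i\in\Phi(Q)$, and the same will hold for every indecomposable once we control self-extensions.

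Next I would fix an admissible ordering of $Q_0$ and introduce the reflection functors $S_i^{\pm}$ at sinks and sources: $S_i^{+}\colon\mod KQ\to\mod K\sigma_iQ$ restricts to an equivalence on the full subcategory of modules having no summand isomorphic to the simple $S_i$, with quasi-inverse $S_i^{-}$, and on dimension vectors $S_i^{\pm}$ acts by the simple reflection $s_i$ wherever it is defined. Let $C^{+}=S_n^{+}\cdots S_1^{+}$ be the Coxeter functor and $c=s_n\cdots s_1$ the Coxeter transformation on $\ZZ^{Q_0}$. The key step is: for $Q$ Dynkin, every indecomposable $M$ is preprojective, i.e.\ $(C^{+})^{N}M=0$ for some $N\gg 0$. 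Granting this, every indecomposable $M$ is obtained from some projective $P_i$ by a finite composite of inverse reflection functors, so it is an iterated $\tau^{-1}$ of a projective; since reflection functors are equivalences on the relevant subcategories, $\End_A(M)=K$ and $\Ext^1_A(M,M)=0$, whence $q_Q(\underline{\dim}M)=1$ and $\underline{\dim}M\ge 0$ is nonzero, that is, a positive root.

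Surjectivity and injectivity then follow from tracking dimension vectors along reflection functors. On the combinatorial side, the $c$-orbits through the $\underline{\dim}P_i$ that remain in the positive cone enumerate, by standard root-system combinatorics, every positive root of $\Phi(Q)$ exactly once; building the corresponding indecomposables by applying the matching inverse reflection functors (and stopping precisely when the dimension vector would leave the positive cone) produces, for each positive root $\alpha$, an indecomposable $M$ with $\underline{\dim}M=\alpha$. For injectivity, given indecomposables $M,N$ with $\underline{\dim}M=\underline{\dim}N=d$, apply the sequence of reflection functors dictated by writing $d$ as a composite of simple reflections carrying it down to a coordinate vector $e_i$: since along this path the dimension vector stays positive (hence nonzero), neither module ever becomes the forbidden simple, so each $S_j^{+}$ in the sequence is an equivalence at that stage and carries $M$ and $N$ to the unique indecomposable with dimension vector $e_i$, namely $S_i$; therefore $M\cong N$.

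The main obstacle is exactly the ``every indecomposable is preprojective'' step, equivalently that $KQ$ is representation-finite with a single (preprojective $=$ preinjective) component. I expect to handle it numerically: positive definiteness of $q_Q$ forces the Coxeter transformation $c$ to have finite order on $\ZZ^{Q_0}$ and no nonzero fixed vector, so the $c$-orbit of any nonzero dimension vector cannot stay inside the positive cone indefinitely; combined with the fact that $\underline{\dim}\,\tau^{-1}M=c^{-1}\underline{\dim}M$ away from projective/injective summands and that a regular indecomposable would survive all powers of $C^{+}$, this yields a contradiction unless every indecomposable is preprojective. An alternative packaging of the whole argument is via Auslander--Reiten theory: prove directly that the AR quiver of $KQ$ is finite and connected (again using the Tits form), and then read off both the bijection and its injectivity by walking along the almost split sequences. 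Either route, the delicate point is the interplay between the homological Euler form and the combinatorics of the finite root system.
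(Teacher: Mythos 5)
This statement is Gabriel's classical theorem, which the paper quotes in its introduction as background motivation and does not prove (its own results concern Newton polytopes of modules in $\tau$-tilting theory), so there is no proof in the paper to compare against; your sketch has to be judged on its own. What you outline is the standard Bernstein--Gelfand--Ponomarev argument, and as a sketch it is sound: the hereditary Euler-form identity, positive definiteness of the Tits form in the Dynkin case, the description of $\Phi(Q)$ as the vectors of Tits norm $1$, reflection functors acting as simple reflections on dimension vectors, and the reduction of every indecomposable to a projective via the Coxeter functor are exactly the ingredients of the classical proof. The point you yourself flag --- that every indecomposable is preprojective, equivalently that no nonzero dimension vector has a $c$-orbit staying in the positive cone --- is indeed the heart of the matter, and your numerical plan (finite order of the Coxeter transformation, no nonzero fixed vectors, $\underline{\dim}\,\tau^{-1}M=c^{-1}\underline{\dim}M$ away from projectives) is the standard way to close it; as written it is an outline, not a proof, so that step still carries the real work. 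Two smaller points need care in a full write-up: (1) in the injectivity argument you should state precisely the BGP lemma that $S_j^{+}$ sends an indecomposable $M\not\cong S_j$ to an indecomposable with dimension vector $s_j(\underline{\dim}M)$, and justify ``neither module ever becomes the forbidden simple'' by observing that if the module at some stage were $S_j$ its dimension vector would be the coordinate vector $e_j$, whose reflection $s_j e_j=-e_j$ leaves the positive cone, contradicting the choice of the reduction sequence; (2) the claim that $\End_A(M)=K$ and $\Ext^1_A(M,M)=0$ transport along reflection functors should be backed by the fact that these functors induce isomorphisms on both $\Hom$ and $\Ext^1$ between modules without the relevant simple summand (e.g.\ via their realization as APR tilting functors). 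With those details supplied, your route is a complete and correct proof of the stated theorem, and equivalent in substance to the original BGP-style arguments; an Auslander--Reiten-theoretic packaging, as you note, is an equally valid alternative.
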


 Gabriel's theorem implies that the indecomposable modules over  $A=KQ$ are uniquely determined by their dimensional vectors.
 Since then, it has become a standard question in representation theory to identify which
class of modules are uniquely determined up to isomorphism by their composition series, or
equivalently by their dimension vectors, c.f., \cite{AR-1985,AD-2013,Mizuno-2014,Reid-2020}. 

In general, there is no satisfactory answer to the above question. Let us give an example.
\begin{example}\label{ex:A2}
  Consider the preprojective algebra $\Pi$ of type $A_2$, i.e., the algebra given by the following quiver
\[
\begin{tikzcd}
1 \arrow[r,bend left,"\alpha"]  & 2\arrow[l,bend left,swap,"\beta"]
\end{tikzcd}
\]
with relations $\alpha\beta=0,\;\beta\alpha=0$.
It is easy to see that 
\[0\rightarrow S_2\rightarrow P_1\rightarrow S_1\rightarrow 0\;\;\;\text{and}\;\;\;0\rightarrow S_1\rightarrow P_2\rightarrow S_2\rightarrow 0
\]
are exact sequences in $\mod \Pi$, where $S_i$ and $P_i$ denote the simple module and indecomposable projective module corresponding to vertex $i\in\{1,2\}$. Clearly, $\underline{\dim}(P_1)=(1,1)=\underline{\dim}(P_2)$ but $P_1$ and $P_2$ are not isomorphic. So the dimensional vector $(1,1)$ can not distinguish the two indecomposable projective modules $P_1$ and $P_2$.
\end{example}

\begin{definition}[Newton polytopes of modules]
    Let $A$ be a finite dimensional basic algebra over a field $K$ with $n$ simple modules. The Newton polytope $\mathcal N(U)$ of a module $U\in\mod A$ is  the convex hull in $\mathbb R^n$ spanned the dimensional vectors of ``quotient modules" of $U$. 
\end{definition}

\begin{remark}
   Recently, for totally different purpose, the combinatorics side of Newton polytopes of modules defined using submodules were studied by Fei in \cite{fei_2019a,fei_2019b}. For convenience of this paper, our Newton polytopes are defined  using quotient modules.
\end{remark}

Let us continue the Example \ref{ex:A2}. We know  the Newton polytope $\mathcal N(P_1)$ is the convex hull of $\{(1,1),\;(1,0),\;(0,0)\}$ and  the Newton polytope $\mathcal N(P_2)$ is the convex hull of $\{(1,1),\;(0,1),\;(0,0)\}$. Thus $\mathcal N(P_1)\neq \mathcal N(P_2)$. So it is natural to ask whether we can use the Newton polytopes to  distinguish some class of modules for general finite dimensional algebras. This is likely, because the Newton polytopes contain much more information than dimensional vectors. Indeed, it is easy to see that if two modules have the same Newton polytope, then they have the same dimensional vector.

\begin{definition}[Left finite modules]
 A module $N\in\mod A$ is said to be {\em left finite}, if the torsion class $\langle N\rangle_{\rm tors}$ generated by $N$ is functorially finite, equivalently, there exists a module $M\in\mod A$ such that $\langle N\rangle_{\rm tors}=\Fac M$.
\end{definition}

$\tau$-tilting theory was introduced by Adachi, Iyama and Reiten \cite{air_2014}, which completes the classic tilting theory from the viewpoint of mutations. Basic notions in $\tau$-tilting theory  will be recalled in Section \ref{sec:2}.
In the $\tau$-tilting theory, there exist two classes of foundamental modules: indecomposable $\tau$-rigid modules and left finite bricks.  Now let us state the main result in this paper.

\begin{theorem}[Theorem \ref{thm:main}]
Let $A$ be a finite dimensional basic algebra over a field $K$. Suppose that  $U$ and $V$ are two modules in $\mod A$ with the same Newton polytope. The following statements hold.

\begin{itemize}

    \item [(i)]  For any $\tau$-tilting pair $(M,P)$, $U\in\Fac M$ if and only if $V\in \Fac M$.
   \item [(ii)] If $U$ and $V$ are left finite, then $\langle U\rangle_{\rm tors}=\langle V\rangle_{\rm tors}$.
    \item[(iii)] If  $U$ and $V$ are  left finite bricks, then $U\cong V$.
    \item[(iv)] If $U$ and $V$ are indecomposable $\tau$-rigid modules, then $ U\cong V$.
     \item[(v)] If $U$ and $V$ are  $\tau$-rigid modules, then $U\oplus V$ is $\tau$-rigid.
\end{itemize}
\end{theorem}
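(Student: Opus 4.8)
The plan is to make part (i) the crux: I would prove (i) directly and then deduce (ii)--(v) from it together with standard $\tau$-tilting theory.

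For (i), the idea is to pass through linear stability functions. For $\theta\in\RR^n$ I would set
\[
\mathcal T_\theta:=\{X\in\mod A \ :\ \theta(\underline{\dim}\,X'')\ge 0\ \text{for every quotient module }X''\text{ of }X\},
\]
which is a torsion class (visibly closed under quotients, and closed under extensions since $\theta$ is additive on short exact sequences). The key elementary point is that, because $\mathcal N(X)$ is the convex hull of the vectors $\underline{\dim}\,X''$ and $\theta$ is linear,
\[
X\in\mathcal T_\theta \iff \theta\ge 0\ \text{on all of }\mathcal N(X),
\]
so membership in $\mathcal T_\theta$ is detected by the Newton polytope alone. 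The second ingredient I would invoke is that every functorially finite torsion class has this form: for a $\tau$-tilting pair $(M,P)$ there is $\theta_{(M,P)}\in\RR^n$ with $\Fac M=\mathcal T_{\theta_{(M,P)}}$. (After killing the idempotents supporting $P$ one reduces to $P=0$; then this is the identification of $\Fac M={}^{\perp}(\tau M)$ with a chamber of the wall-and-chamber structure on $\mod A$ — a maximal region on which $\mathcal T_\theta$ is constant — with $\theta_{(M,P)}$ taken generic in the cone dual to the $g$-vector cone of $(M,P)$.) Given this, (i) is immediate: if $\mathcal N(U)=\mathcal N(V)$ then $U\in\Fac M=\mathcal T_{\theta_{(M,P)}}\iff V\in\mathcal T_{\theta_{(M,P)}}=\Fac M$ for every $\tau$-tilting pair $(M,P)$.

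The consequences I would argue as follows. \emph{(ii)}: since $U$ is left finite, $\langle U\rangle_{\rm tors}=\Fac M$ for the support $\tau$-tilting module it determines (Adachi--Iyama--Reiten), so $U\in\Fac M$, whence $V\in\Fac M=\langle U\rangle_{\rm tors}$ by (i); this gives $\langle V\rangle_{\rm tors}\subseteq\langle U\rangle_{\rm tors}$, and the reverse inclusion is symmetric. \emph{(iii)}: by (ii) $\langle U\rangle_{\rm tors}=\langle V\rangle_{\rm tors}$, and $S\mapsto\langle S\rangle_{\rm tors}$ is injective on bricks — it is the bijection between bricks and completely join-irreducible torsion classes — so $U\cong V$. \emph{(v)}: it is enough to show $\Hom(U,\tau V)=0=\Hom(V,\tau U)$, the summands being $\tau$-rigid already; and $\Hom(V,\tau U)=0$ means exactly $V\in{}^{\perp}(\tau U)=\Fac T_U$, where $T_U\supseteq U$ is the Bongartz completion, a $\tau$-tilting module, so applying (i) to the $\tau$-tilting pair $(T_U,0)$ (with $U\in\Fac T_U$) gives $V\in\Fac T_U$, i.e.\ $\Hom(V,\tau U)=0$; the other vanishing is symmetric. \emph{(iv)}: $\Fac U$ is functorially finite since $U$ is $\tau$-rigid, say $\Fac U=\Fac M$ for a $\tau$-tilting pair $(M,P)$ with $U\in\Fac M$; by (i) $V\in\Fac M=\Fac U$, and symmetrically $U\in\Fac V$, hence $\Fac U=\Fac V=:\mathcal T$. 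Being $\tau$-rigid and lying in $\mathcal T$, both $U$ and $V$ are $\Ext$-projective in $\mathcal T$, hence indecomposable summands of the support $\tau$-tilting module $P(\mathcal T)$. Since $\mathcal T=\Fac U$ is completely join-irreducible, it has a unique lower cover $\mathcal T'$, obtained from $P(\mathcal T)$ by mutation at one determined summand $M_0$, and every other indecomposable summand of $P(\mathcal T)$ lies in $\mathcal T'$; as $\langle U\rangle_{\rm tors}=\mathcal T\not\subseteq\mathcal T'$ and likewise for $V$, neither $U$ nor $V$ can be a summand of $P(\mathcal T)$ other than $M_0$, so $U\cong M_0\cong V$.

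The hard part, as I see it, is the second ingredient of (i): realising an arbitrary functorially finite torsion class as a single $\mathcal T_\theta$. Everything after that is bookkeeping with known results about $\tau$-tilting theory and the lattice of torsion classes. For a fully self-contained account I would derive this identification directly from the combinatorics of $g$-vector cones; failing a single $\theta$, it would already suffice to write $\Fac M$ as an intersection of finitely many $\mathcal T_{\theta_i}$, since each such membership is again visible on the Newton polytope.
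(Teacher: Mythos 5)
Your proposal is correct, and its core coincides with the paper's: part (i) is proved exactly as you do it, via semistable torsion classes $\overline{\mathcal T}_\delta$ (membership is a linear condition, hence visible on the Newton polytope) together with the fact that every functorially finite torsion class is semistable. What you call ``the hard part'' is precisely what the paper imports as a citation: Asai's result that $\Fac M=\overline{\mathcal T}_{\delta_{(M,P)}}$ for the $\delta$-vector of the $\tau$-tilting pair (together with \cite{BKT-2014} for $\overline{\mathcal T}_\delta$ being a torsion class), so no wall-and-chamber genericity or reduction to $P=0$ is needed --- and indeed your parenthetical sketch is slightly off (the relevant $\theta$ lies in the cone spanned by the $\delta$-vectors of the summands of the pair, not in a dual cone, and the result is stated directly for pairs, so killing idempotents is unnecessary); since you are invoking a known identification rather than proving it, this is an imprecision, not a gap. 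Part (ii) is argued identically. For (iii)--(v) you take genuinely different supporting routes: the paper settles (iii) and (iv) in one stroke with the Demonet--Iyama--Jasso brick--$\tau$-rigid bijection (uniqueness of the brick $S$ with $\langle S\rangle_{\rm tors}=\Fac N$, plus injectivity of $N\mapsto S$), whereas you use the Barnard--Carroll--Zhu bijection between bricks and completely join-irreducible torsion classes for (iii) and, for (iv), a heavier lattice-theoretic argument through the unique lower cover of $\Fac U$ and mutation of $\mathcal P(\mathcal T)$ --- valid, but it needs the facts that Hasse arrows below a functorially finite torsion class are given by mutation and that the unexchanged summands survive, none of which the paper's shorter argument requires. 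For (v) the paper deduces $U,V\in\add M$ from $\Fac U=\Fac V$ and the minimality/maximality criterion of Adachi--Iyama--Reiten, while your argument via the Bongartz completion ($V\in{}^{\perp}(\tau U)=\Fac T_U$ by applying (i) to $(T_U,0)$, and symmetrically) is arguably cleaner and notably needs only (i), not (ii). Both packages of auxiliary results are standard, so the differences are a matter of economy rather than correctness.
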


An algebra $A$ is said to be {\em $\tau$-tilting finite}, if $\mod A$ has only finitely many torsion classes (See Definition \ref{def:tau-finite} and Theorem \ref{thm:DIJ-4.2} for details). Typical examples of $\tau$-tilting finite algebras contain the  preprojective algebras of Dynkin quivers, c.f., \cite{Mizuno-2014b}.  Since  all bricks are left finite for a $\tau$-tilting finite algebra, we have the following direct corollary.

 \begin{corollary}[Corollary \ref{cor:main}]
    Let $A$ be a $\tau$-tilting finite algebra. Then the bricks in $\mod A$ are uniquely determined by their Newton polytopes.
\end{corollary}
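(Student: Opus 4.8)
The plan is to deduce the statement directly from Theorem~\ref{thm:main}(iii), so the only thing that really needs to be verified is that over a $\tau$-tilting finite algebra every brick is left finite; everything substantial is already packaged in the main theorem.

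First I would recall the characterization of $\tau$-tilting finiteness (Theorem~\ref{thm:DIJ-4.2}): $A$ is $\tau$-tilting finite if and only if $\mod A$ has only finitely many torsion classes, and in that case every torsion class in $\mod A$ is functorially finite. Now let $N\in\mod A$ be any brick. The torsion class $\langle N\rangle_{\rm tors}$ generated by $N$ is then functorially finite, which is precisely the definition of $N$ being left finite; equivalently, there is $M\in\mod A$ with $\langle N\rangle_{\rm tors}=\Fac M$. Hence every brick of a $\tau$-tilting finite algebra is a left finite brick.

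With this in hand the corollary is immediate: given two bricks $U$ and $V$ in $\mod A$ with $\mathcal N(U)=\mathcal N(V)$, the previous step shows both are left finite bricks, so Theorem~\ref{thm:main}(iii) applies and yields $U\cong V$. This is exactly the assertion that a brick is determined up to isomorphism by its Newton polytope.

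I do not anticipate a genuine obstacle here, since the passage from ``$\tau$-tilting finite'' to ``all bricks left finite'' is a routine invocation of the Demonet--Iyama--Jasso characterization. The only point that requires a little care is checking that the notion of ``left finite'' used in this paper (functorial finiteness of $\langle N\rangle_{\rm tors}$) matches the functorial finiteness of torsion classes granted by $\tau$-tilting finiteness, which it does verbatim; beyond that, the entire weight of the argument rests on Theorem~\ref{thm:main}.
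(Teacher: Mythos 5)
Your proposal is correct and follows exactly the paper's argument: invoke Theorem~\ref{thm:DIJ-4.2} to conclude that all bricks over a $\tau$-tilting finite algebra are left finite, then apply Theorem~\ref{thm:main}(iii). No differences worth noting.
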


 {\bf Acknowledgement.} 
 This project have been partially supported  by grants from the National Key R\&D Program of China (2024YFA1013801), 
 the National Natural Science Foundation of China (Grant No. 12071422), and the Guangdong Basic and Applied Basic Research Foundation (Grant No. 2021A1515012035).

\section{$\tau$-tilting theory}\label{sec:2}

In this subsection, we recall $\tau$-tilting theory introduced by Adachi, Iyama and Reiten \cite{air_2014}. We fix a finite dimensional basic algebra $A$ over  a field $K$.  Denote by $\mod A$ the category of finitely generated right $A$-modules, and by $\tau$ the Auslander-Reiten translation in $\mod A$.  The  isomorphism classes of indecomposable projective modules in $\mod A$ are denoted by $P_1,\ldots,P_n$.

Given a module $M\in\mod A$, we denote by
\begin{itemize}
\item $\add M$  the additive closure of $M$ in $\mod A$;
\item $\Fac M$ the factor modules of the modules in $\add M$;
\item  $\prescript{\bot}{}{M}:=\{X\in \mod A  \mid \Hom_{A}(X,M)=0\}$ and $M^{\bot}:=\{Y\in \mod A  \mid \Hom_{A}(M,Y)=0\}$;
\item $|M|$ the number of non-isomorphic indecomposable direct summands of $M$, e.g., $|A|=n$.
\end{itemize}

\begin{definition}Let $M$ be a module in $\mod A$ and $P$ a projective module in $\mod A$.

\begin{itemize}
    \item [(i)] $M$ is called {\em $\tau$-rigid} if $\Hom_A(M,\tau M)=0$.
\item[(ii)] The pair $(M,P)$ is called \emph{$\tau$-rigid} if $M$ is $\tau$-rigid and $\Hom_A(P,M)=0$. 
\item[(iii)]  The pair $(M,P)$ is called \emph{$\tau$-tilting} if $(M,P)$ is a $\tau$-rigid pair and  $|M|+|P|=|A|$.
\end{itemize}
\end{definition}
 We will always consider modules and  $\tau$-rigid pairs up to isomorphism.  In a basic $\tau$-tilting pair $(M,P)$, it is known \cite[Proposition 2.3]{air_2014} that $P$ is uniquely determined by $M$.

A subcategory $\mathcal T$ of $\mod A$ is called a {\em torsion class}, if $\mathcal T$ is closed under quotients and extensions. A torsion class $\mathcal T$ is said to be {\em functorially finite}, if there exists a module $M\in\mod A$ such that $\mathcal T=\Fac M$.

Let $\mathcal C$ be a subcategory of $\mod A$. A module $U\in\mathcal C$ is said to be {\em Ext-projective} in $\mathcal C$, if $\Ext_A^1(U,\mathcal C)=0$. We denote by $\mathcal P(\mathcal C)$ the direct sum of one copy of each of the indecomposable Ext-projective objects in $\mathcal C$ up to isomorphisms.

\begin{theorem}[\cite{air_2014}*{Proposition 1.2 (b) and Theorem 2.7}] \label{thm:air-torsion}
The following statements hold.
\begin{itemize}
    \item [(i)] There is a well-defined map $\Psi$ 
from $\tau$-rigid pairs to functorially finite torsion classes in $\mod A$
given by $(M,P)\mapsto \Fac M$.

\item[(ii)] The above map $\Psi$ is a bijection if we restrict it to basic $\tau$-tilting pairs.

\item[(iii)] Let $\mathcal T$ be a functorially finite torsion class and denote by $(M,P)$ the basic $\tau$-tilting pair such that $\Fac M=\mathcal T$. Then  $M=\mathcal P(\mathcal T)$.
\end{itemize}
\end{theorem}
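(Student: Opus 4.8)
\emph{Proof proposal.} The strategy is to extract everything from one explicit description of functorially finite torsion classes in terms of dimension vectors of quotient modules, which immediately gives (i); parts (ii), (iii), (v) are then formal, and (iv) is the delicate one.

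For $\theta\in\RR^n$ put
\[
\mathcal D_\theta:=\{X\in\mod A\mid \theta\cdot\underline{\dim}X'\ge 0\text{ for every quotient module }X'\text{ of }X\}.
\]
One checks directly that $\mathcal D_\theta$ is a torsion class, and — this is where $\mathcal N$ enters — that $X\in\mathcal D_\theta$ if and only if $\mathcal N(X)$ lies in the half-space $\{p\in\RR^n\mid\theta\cdot p\ge 0\}$ (the minimum of $\theta$ over $\mathcal N(X)$ is attained at a vertex $\underline{\dim}X'$, $X'$ a quotient of $X$, and is always $\le\theta\cdot 0=0$); so membership in $\mathcal D_\theta$ depends only on $\mathcal N(X)$. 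The core claim is: for a $\tau$-tilting pair $(M,P)$ with $M=\bigoplus_iM_i$, $P=\bigoplus_jP_j$, and $\theta:=\sum_i g(M_i)-\sum_j[P_j]$ (the $g$-vectors, $[P_j]\in\ZZ^n$ the class of $P_j$), one has $\Fac M=\mathcal D_\theta$. For ``$\subseteq$'' I would use the identity $g(M_i)\cdot\underline{\dim}Y=\dim\Hom_A(M_i,Y)-\dim\Hom_A(Y,\tau M_i)$ together with $\Fac M\subseteq\prescript{\bot}{}{(\tau M)}$ (as $M$ is $\tau$-rigid) and $\Hom_A(P,\Fac M)=0$ (from $\Hom_A(P,M)=0$ and projectivity of $P$), which give $\theta\cdot\underline{\dim}Y=\sum_i\dim\Hom_A(M_i,Y)\ge 0$ for $Y\in\Fac M$; since $\Fac M$ is closed under quotients, $\Fac M\subseteq\mathcal D_\theta$. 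For ``$\supseteq$'', if $X\notin\Fac M$, let $X_{\mathcal F}\ne 0$ be the torsion-free quotient of $X$ for the torsion pair $(\Fac M,\mathcal F)$, $\mathcal F=M^{\bot}$; since $\Fac M=\prescript{\bot}{}{(\tau M)}\cap P^{\bot}$ (Bongartz completion, \cite{air_2014}) and $\Fac M\cap\mathcal F=0$, the nonzero $X_{\mathcal F}\in\mathcal F$ is not in $\Fac M$, so $\Hom_A(X_{\mathcal F},\tau M)\ne 0$ or $\Hom_A(P,X_{\mathcal F})\ne 0$; using $\Hom_A(M_i,X_{\mathcal F})=0$ one gets $\theta\cdot\underline{\dim}X_{\mathcal F}=-\sum_i\dim\Hom_A(X_{\mathcal F},\tau M_i)-\sum_j\dim\Hom_A(P_j,X_{\mathcal F})<0$, so $X_{\mathcal F}$ witnesses $X\notin\mathcal D_\theta$. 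Granting $\Fac M=\mathcal D_\theta$, statement (i) is immediate: $\mathcal N(U)=\mathcal N(V)$ forces $U\in\Fac M\iff U\in\mathcal D_\theta\iff V\in\mathcal D_\theta\iff V\in\Fac M$.

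Next I would deduce (ii) and (v) formally. For (ii): if $U$ is left finite then $\langle U\rangle_{\rm tors}=\Fac M_U$ for a $\tau$-tilting pair $(M_U,P_U)$; since $U\in\Fac M_U$, (i) gives $V\in\Fac M_U=\langle U\rangle_{\rm tors}$, so $\langle V\rangle_{\rm tors}\subseteq\langle U\rangle_{\rm tors}$, and the reverse inclusion is symmetric. For (v): $U\oplus V$ is $\tau$-rigid iff $\Hom_A(U,\tau V)=0=\Hom_A(V,\tau U)$ (the ``diagonal'' terms vanish by hypothesis), i.e.\ iff $U\in\prescript{\bot}{}{(\tau V)}$ and $V\in\prescript{\bot}{}{(\tau U)}$; by the Bongartz completion $\prescript{\bot}{}{(\tau V)}=\Fac T_V$ is a functorially finite torsion class with $V$ a summand of $T_V$, so $V\in\Fac T_V$, whence (i) gives $U\in\Fac T_V=\prescript{\bot}{}{(\tau V)}$, and symmetrically $V\in\prescript{\bot}{}{(\tau U)}$.

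Finally, (iii) and (iv). For (iii): by (ii), $\langle U\rangle_{\rm tors}=\langle V\rangle_{\rm tors}=:\mathcal T$, so for every torsion class $\mathcal S$ one has $U\in\mathcal S\iff\mathcal T\subseteq\mathcal S\iff V\in\mathcal S$; thus $U$ and $V$ lie in exactly the same torsion classes. Since $U$ (resp.\ $V$) is a brick, the torsion classes not containing it are closed under joins, so there is a largest one $\eta(U)$, and $\eta(U)=\eta(V)$; by the brick labelling of the Hasse quiver of torsion classes (see \cite{air_2014} and the subsequent lattice theory of torsion classes), the covering relation from $\eta(U)$ up to $\eta(U)\vee\langle U\rangle_{\rm tors}$ carries $U$ as its unique brick label, and likewise $V$, forcing $U\cong V$. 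For (iv): by (ii) and $\langle U\rangle_{\rm tors}=\Fac U$ for $\tau$-rigid $U$, we get $\Fac U=\Fac V=:\mathcal T$, so $U$ and $V$ are indecomposable $\operatorname{Ext}$-projective objects of $\mathcal T$ generating it, i.e.\ indecomposable direct summands of the basic $\tau$-rigid module $\mathcal P(\mathcal T)$; since the $g$-vectors of the indecomposable summands of a $\tau$-rigid module are linearly independent, it then suffices to show that $\mathcal N(U)$ determines $g(U)$ for a $\tau$-rigid $U$ — equivalently, that an indecomposable $\tau$-rigid module is determined by the functorially finite torsion class it generates — which yields $U\cong V$.

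The main obstacle is the identification $\Fac M=\mathcal D_\theta$ underlying (i); I expect the sketch above to go through using only $\tau$-tilting-theoretic input from \cite{air_2014}. The second delicate point is (iv): recovering $g(U)$ from $\mathcal N(U)$ for an indecomposable $\tau$-rigid $U$ — equivalently, separating the summands of a $\tau$-rigid module by their Newton polytopes — does not follow from (i) alone (part (i) only recovers $\Fac U$ from $\mathcal N(U)$), and will require exploiting the finer combinatorics of $\mathcal N(U)$; this is where the hard work lies. Part (iii) additionally relies on external facts from the lattice theory of torsion classes (closure under joins of the torsion classes avoiding a fixed brick, and well-definedness of brick labels of Hasse arrows).
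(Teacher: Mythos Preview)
Your proposal does not address the stated Theorem~\ref{thm:air-torsion} at all: that is a result cited from \cite{air_2014} (the bijection $(M,P)\mapsto\Fac M$ between basic $\tau$-tilting pairs and functorially finite torsion classes), and the paper gives no proof of it. What you have actually written is a proof of the paper's main Theorem~\ref{thm:main}. I compare against that.

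For (i) and (ii) your approach coincides with the paper's. Your class $\mathcal D_\theta$ is the paper's $\overline{\mathcal T}_\delta$, your $\theta$ is the paper's $\delta_{(M,P)}$, and the identity $\Fac M=\mathcal D_\theta$ you sketch is exactly Lemma~\ref{lem:delta}(ii), which the paper cites from \cite{asai-2021} rather than reproving. Your derivation of it is correct. Part (ii) is then identical to the paper's argument. For (v) your Bongartz-completion argument is correct and close to the paper's, which instead uses Proposition~\ref{pro:minmax}(ii) to place both $U$ and $V$ in $\add M$ for the basic $\tau$-tilting $M$ with $\Fac M=\Fac U=\Fac V$.

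For (iii) and (iv) you take a genuinely different and harder route, and in (iv) you leave a real gap. The paper dispatches both parts immediately via the brick--$\tau$-rigid correspondence (Theorem~\ref{thm:dij-corr}): once (ii) gives $\langle U\rangle_{\rm tors}=\langle V\rangle_{\rm tors}$, the bijection between indecomposable $\tau$-rigid modules and left finite bricks, together with the uniqueness clause in Theorem~\ref{thm:dij-corr}(i), forces $U\cong V$ in both settings with no further work. Your (iii) via brick labels of Hasse arrows is valid but imports heavier lattice-theoretic machinery that the paper does not need. Your (iv) reduces to the claim that $\mathcal N(U)$ determines $g(U)$ for indecomposable $\tau$-rigid $U$; you do not prove this, you acknowledge it as ``where the hard work lies'', and it does not follow from anything you have established. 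The paper never needs it: from $\Fac U=\Fac V$ the injectivity of $N\mapsto\Fac N$ on indecomposable $\tau$-rigid modules (Theorem~\ref{thm:dij-corr}(ii)) already gives $U\cong V$.
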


\begin{proposition}[\cite{air_2014}*{{Proposition 2.9 and Theorem 2.10}}]\label{pro:minmax}
 Let $(U,Q)$ be a basic $\tau$-rigid pair and  $(M,P)$ a basic $\tau$-tilting pair in $\mod A$. Then
 \begin{itemize}
 \item[(i)]  $\Fac U$ and  $\prescript{\bot}{}({\tau U})\cap Q^\bot$ are functorially finite torsion classes in $\mod A$.
 \item[(ii)]  $\Fac U\subseteq \Fac M\subseteq \prescript{\bot}{}({\tau U})\cap Q^\bot$ if and only if $(U,Q)$ is a direct summand of the basic $\tau$-tilting pair $(M,P)$.
 \end{itemize}
\end{proposition}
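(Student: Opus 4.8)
The plan is to prove (i) and (ii) in turn, with the Auslander--Reiten formula as the engine throughout: for $X,Y\in\mod A$ one has $\Hom_A(X,\tau Y)=0$ if and only if $\Ext^1_A(Y,\Fac X)=0$. Specialised to $X=Y$ this says exactly that a $\tau$-rigid module is Ext-projective in the torsion class it generates, and in general it lets me pass back and forth between the orthogonality condition defining $\prescript{\bot}{}{(\tau U)}$ and an $\Ext^1$-vanishing condition. I will also repeatedly use the elementary inclusions, valid for any $\tau$-rigid pair $(M,P)$, that $\Fac M\subseteq\prescript{\bot}{}{(\tau M)}$ (from $\tau$-rigidity of $M$ and the Auslander--Reiten formula, since $\Fac X\subseteq\Fac M$ for $X\in\Fac M$) and $\Fac M\subseteq P^{\bot}$ (from $\Hom_A(P,M)=0$ together with exactness of $\Hom_A(P,-)$ for projective $P$).

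For (i): that $\Fac U$ is a functorially finite torsion class is immediate from Theorem \ref{thm:air-torsion}(i), because $(U,Q)$ is a $\tau$-rigid pair and $\Psi((U,Q))=\Fac U$. For $\prescript{\bot}{}{(\tau U)}\cap Q^{\bot}$ I first note that for \emph{any} module $N$ the subcategory $\prescript{\bot}{}{N}$ is a torsion class: it is closed under quotients because precomposition with an epimorphism embeds $\Hom_A(X',N)$ into $\Hom_A(X,N)$, and closed under extensions because $\Hom_A(-,N)$ is left exact; hence the intersection $\prescript{\bot}{}{(\tau U)}\cap Q^{\bot}$ is a torsion class, and by the inclusions above it contains $\Fac U$. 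The substantive point is that $\prescript{\bot}{}{(\tau U)}\cap Q^{\bot}$ is functorially finite, and for this I would carry out the Bongartz completion: construct a basic $\tau$-tilting pair $(M_0,P_0)$ having $(U,Q)$ as a direct summand and with $\Fac M_0=\prescript{\bot}{}{(\tau U)}\cap Q^{\bot}$, by first reducing (replacing $A$ by $A/AeA$ for the idempotent $e$ cutting out $Q$) to the case $Q=0$, then forming Bongartz's universal extension $0\to A\to B\to U^{m}\to 0$ along a generating set of $\Ext^1_A(U,A)$ and taking $M_0$ to be the basic module underlying $B\oplus U$; verifying that $M_0$ is $\tau$-rigid with $n$ pairwise non-isomorphic indecomposable summands and that $\Fac M_0=\prescript{\bot}{}{(\tau U)}$ is the technical core.

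For (ii): suppose first $(U,Q)$ is a direct summand of $(M,P)$. Then $U\mid M$ gives $\Fac U\subseteq\Fac M$, while $\tau U\mid\tau M$ and $Q\mid P$ give $\prescript{\bot}{}{(\tau M)}\subseteq\prescript{\bot}{}{(\tau U)}$ and $P^{\bot}\subseteq Q^{\bot}$; combining with $\Fac M\subseteq\prescript{\bot}{}{(\tau M)}\cap P^{\bot}$ yields the sandwich $\Fac U\subseteq\Fac M\subseteq\prescript{\bot}{}{(\tau U)}\cap Q^{\bot}$. Conversely, suppose this sandwich holds. From $\Fac M\subseteq\prescript{\bot}{}{(\tau U)}$ and the Auslander--Reiten formula, $\Ext^1_A(U,X)=0$ for every $X\in\Fac M$, so $U$ is Ext-projective in $\Fac M$; hence each indecomposable summand of $U$ is an indecomposable Ext-projective object of $\Fac M$ and therefore a summand of $\mathcal P(\Fac M)=M$ by Theorem \ref{thm:air-torsion}(iii), i.e.\ $U\mid M$. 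From $M\in\Fac M\subseteq Q^{\bot}$ we get $\Hom_A(Q,M)=0$, so $(M,Q)$ is a $\tau$-rigid pair; since the projective part of a basic $\tau$-tilting pair is determined by its module part (it is the sum of the $P_i$ with $e_iM=0$, \cite{air_2014}*{Proposition 2.3}), $Q\mid P$. Thus $(U,Q)$ is a direct summand of $(M,P)$.

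The main obstacle is the functorial-finiteness assertion in (i): every other step is a short diagram chase or a direct appeal to Theorem \ref{thm:air-torsion} and the Auslander--Reiten formula, but exhibiting a module $M_0$ with $\Fac M_0=\prescript{\bot}{}{(\tau U)}\cap Q^{\bot}$ requires the full Bongartz-completion construction. The delicate parts of that verification are the reduction to the case $Q=0$, the computation $\Ext^1_A(B\oplus U,Y)=0$ for $Y\in\prescript{\bot}{}{(\tau U)}$ forcing $\prescript{\bot}{}{(\tau U)}\subseteq\Fac(B\oplus U)$, and the count showing $B\oplus U$ has exactly $n$ non-isomorphic indecomposable summands; this is the one place where a nontrivial amount of homological bookkeeping is unavoidable.
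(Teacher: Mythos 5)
You should first note that the paper offers no proof of Proposition \ref{pro:minmax} at all: it is imported verbatim from Adachi--Iyama--Reiten \cite{air_2014} (their Proposition 2.9 and Theorem 2.10), so the benchmark is AIR's argument. Measured against that, your part (ii) is essentially correct and is the standard argument: the forward direction by the sandwich $\Fac M\subseteq\prescript{\bot}{}{(\tau M)}\cap P^{\bot}\subseteq\prescript{\bot}{}{(\tau U)}\cap Q^{\bot}$, and the converse by showing $U$ is Ext-projective in $\Fac M$ via the Auslander--Smal{\o} formula, invoking $\mathcal P(\Fac M)=M$ (Theorem \ref{thm:air-torsion}(iii)), and then pinning down $Q$ inside $P$ using $\Hom_A(Q,M)=0$ together with the bound on the number of summands of a $\tau$-rigid pair. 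The genuine gap is in part (i): the functorial finiteness of $\prescript{\bot}{}{(\tau U)}\cap Q^{\bot}$ is precisely the content of AIR's Theorem 2.10 (Bongartz completion), and you do not prove it --- you announce that you ``would carry out the Bongartz completion'' and explicitly defer what you yourself call the technical core. As a self-contained proof the proposal therefore establishes (ii) but not (i).

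Moreover, the sketch you give for that missing step has two concrete weak points. First, the reduction to $Q=0$ by replacing $A$ with $B=A/AeA$ is not innocent: $\tau_B U$ and $\tau_A U$ differ in general, and while $\Hom_A(Y,\tau_A U)=0$ forces $\Hom_B(Y,\tau_B U)=0$ for $Y\in\mod B$ (via $\Hom(Y,\tau X)=0\Leftrightarrow\Ext^1(X,\Fac Y)=0$ and $\Ext^1_B\subseteq\Ext^1_A$), the inclusion you actually need, $\prescript{\bot_B}{}{(\tau_B U)}\subseteq\prescript{\bot}{}{(\tau_A U)}\cap Q^{\bot}$, is not formal and must be argued; so the identification of the two subcategories, and hence the transfer of functorial finiteness, is an unproved claim. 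Second, the classical Bongartz universal extension $0\to A\to B\to U^{m}\to 0$ along generators of $\Ext^1_A(U,A)$ yields $\Ext^1$-rigidity of $B\oplus U$, but in the $\tau$-setting one needs the strictly stronger condition $\Hom_A(B\oplus U,\tau(B\oplus U))=0$, i.e.\ $\Ext^1$-vanishing against all of $\Fac(B\oplus U)$, and one needs $\Fac(B\oplus U)=\prescript{\bot}{}{(\tau U)}$; when the projective dimension of $U$ exceeds one the classical computation does not deliver this, which is exactly why AIR's proof is not a verbatim copy of Bongartz's lemma. (A small additional slip: $Q^{\bot}$ is a torsion class because $Q$ is projective, so $\Hom_A(Q,-)$ is exact; it is not of the form $\prescript{\bot}{}{N}$, so your ``hence the intersection is a torsion class'' needs that one extra line.) Since the paper's intent is simply to cite \cite{air_2014} here, the honest options are either to do the same, or to genuinely carry out the completion argument; as it stands, part (i) of your proposal is a statement of intent rather than a proof.
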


Consider $M\in\mod A$ and let 
\[\bigoplus_{i=1}^nP_i^{b_i}\rightarrow
\bigoplus_{i=1}^nP_i^{a_i}\rightarrow M\rightarrow 0
\]
be the minimal projective presentation of $M$ in $\mod A$. The
vector $$\delta_M:=(a_1-b_1,\ldots,a_n-b_n)^T\in\mathbb Z^n$$ is called the {\em $\delta$-vector} of $M$ and the vector ${\bf g}_M:=-\delta_M$ is called the {\em $g$-vector} of $M$.

For a $\tau$-rigid pair $(M,P)$, we define its  $\delta$-vector and $g$-vector as follows:
\[ 
    \delta_{(M,P)}:=\delta_M-\delta_P,\;\; {\bf g}_{(M,P)}:=-\delta_{(M,P)}.
\]
With this definition, one can see that the $g$-vector ${\bf g}_{(0,P_k)}$ of $(0,P_k)$ is the $k$th column of $I_n$.

\begin{remark}
    Notice that the $\delta$-vectors defined here coincide with the $g$-vectors used in \cite{air_2014} and they correspond to the negative of $g$-vectors in cluster algebras under categorifications. 
\end{remark}

For a module $N\in\mod A$, denote by $\langle N\rangle_{\rm tors}$ the torsion classes of $\mod A$ generating by $N$, that is, $\langle N\rangle_{\rm tors}$ is the smallest torsion class containing $N$.

Recall that a module $N\in\mod A$ is said to be {\em left finite}, if the torsion class $\langle N\rangle_{\rm tors}$ is functorially finite. For example, $\tau$-rigid modules are left finite.
 A module $S\in\mod A$ is said to be a {\em brick}, if $\End_A(S)$ is a division algebra. For example, simple modules are bricks.

\begin{theorem}[Brick-$\tau$-rigid correspondence, \cite{DIJ-17}*{Section 4}] \label{thm:dij-corr}
Let $N$ be an indecomposable $\tau$-rigid module in $\mod A$. Then the following statements hold.
    \begin{itemize}
        \item [(i)] There is a unique brick $S\in\mod A$ satisfying $\Fac N=\langle S\rangle_{\rm tors}$. In particular, the brick $S$ is left finite;
        \item[(ii)] The correspondence $N\mapsto S$ gives a bijection from the indecomposable $\tau$-rigid modules in $\mod A$ to the left finite bricks in $\mod A$.
    \end{itemize}
\end{theorem}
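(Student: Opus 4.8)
The plan is to transport the statement into the lattice $\tors A$ of torsion classes of $\mod A$ and to combine Adachi--Iyama--Reiten's theory (Theorem~\ref{thm:air-torsion}, Proposition~\ref{pro:minmax}) with the brick labelling of its Hasse quiver. Two elementary observations set the stage. First, $\Fac N$ is a functorially finite torsion class and in fact $\Fac N=\langle N\rangle_{\rm tors}$, because any torsion class containing $N$ contains $\add N$ and all its quotients; hence whenever a torsion class $\mathcal T$ satisfies $N\in\mathcal T\subseteq\Fac N$, one has $\mathcal T=\Fac N$. Second, an indecomposable $\tau$-rigid module $N$ is Ext-projective in $\Fac N$ --- an immediate consequence of the Auslander--Reiten formula, since a quotient of $N^{k}$ admits no nonzero morphism to $\tau N$ --- so $N$ occurs as one of the indecomposable direct summands of the module $\mathcal P(\Fac N)$ provided by Theorem~\ref{thm:air-torsion}(iii).

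For part (i), I would first establish that $\Fac N$ is \emph{completely join-irreducible} in $\tors A$: it is covered by a unique torsion class $\mathcal T'$, and it is not the join of the torsion classes strictly contained in it (so $\mathcal T'$ is the largest such). The uniqueness of the cover is the only cheap part: by Adachi--Iyama--Reiten mutation \cite{air_2014}, the downward arrows of the Hasse quiver of functorially finite torsion classes issuing from $\mathcal T:=\Fac N$ are indexed by the indecomposable summands $M_{i}$ of $\mathcal P(\mathcal T)=M_{1}\oplus\cdots\oplus M_{r}$ with $M_{i}\notin\Fac\bigl(\bigoplus_{j\ne i}M_{j}\bigr)$, and since $N$ is one of the $M_{j}$, every $M_{i}\ne N$ satisfies $M_{i}\in\mathcal T=\Fac N\subseteq\Fac\bigl(\bigoplus_{j\ne i}M_{j}\bigr)$; so only $N$ can index a downward arrow. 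That there is (exactly) one such arrow and that it yields a cover in the whole lattice $\tors A$ --- equivalently, the complete join-irreducibility of $\Fac N$ --- I would derive along the lines of \cite{DIJ-17}*{Section~4}, from the semidistributivity of $\tors A$ and $\tau$-tilting reduction; this is the crux (see below). Given the cover $\mathcal T'\subsetneq\Fac N$, I would take $S$ to be the brick it is labelled by: the subcategory $\mathcal W:=\Fac N\cap(\mathcal T')^{\bot}$ is an exact abelian length subcategory of $\mod A$ with a unique simple object $S$ (a general feature of the wide subcategory attached to a covering relation in $\tors A$), which is therefore a brick by Schur's lemma, and $\Fac N$ is the smallest torsion class containing $\mathcal T'$ together with $S$. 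Now $S\in\Fac N$ gives $\langle S\rangle_{\rm tors}\subseteq\Fac N$, while $S\notin\mathcal T'$ (as $S\ne 0$ lies in $(\mathcal T')^{\bot}$) together with the complete join-irreducibility forces $\langle S\rangle_{\rm tors}=\Fac N$, since any torsion class strictly inside $\Fac N$ would lie in $\mathcal T'$; in particular $S$ is left finite. Finally $\mathcal W$, hence its simple object $S$, is determined by the pair $(\mathcal T',\Fac N)=(\mathcal T',\langle S\rangle_{\rm tors})$, and (again via \cite{DIJ-17}*{Section~4}) any brick $S'$ with $\langle S'\rangle_{\rm tors}=\Fac N$ must lie in $\mathcal W$, so $S$ is the unique such brick.

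For part (ii), injectivity is immediate: $N\mapsto S$ satisfies $\langle S\rangle_{\rm tors}=\Fac N$, so $S$ determines $\Fac N$, and $N$ is recovered from $\Fac N$ as the unique indecomposable summand of $\mathcal P(\Fac N)$ failing to lie in $\Fac$ of the others (equivalently, the one indexing the downward arrow $\mathcal T'\subsetneq\Fac N$). For surjectivity, let $S$ be a left finite brick, so $\langle S\rangle_{\rm tors}$ is functorially finite; by the same circle of ideas from \cite{DIJ-17}*{Section~4} it is completely join-irreducible, whence $\mathcal P(\langle S\rangle_{\rm tors})$ has a unique indecomposable summand $N$ not lying in $\Fac$ of the others. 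This $N$ is $\tau$-rigid, $\Fac N=\langle S\rangle_{\rm tors}$ by the preliminary remark, and tracing through the brick labelling of the unique cover below $\langle S\rangle_{\rm tors}$ returns $S$, so $N\mapsto S$. Hence $N\mapsto S$ is a bijection.

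The one real obstacle is everything bundled into ``complete join-irreducibility and the brick labelling'': that the indecomposable summands of a support $\tau$-tilting module $\mathcal P(\mathcal T)$ correspond, through the redundancy condition above, to the Hasse-quiver arrows of $\tors A$ out of $\mathcal T$; that the unique such arrow out of a completely join-irreducible functorially finite torsion class is labelled by a left finite brick; that $S\mapsto\langle S\rangle_{\rm tors}$ is a bijection from left finite bricks onto the completely join-irreducible functorially finite torsion classes; and that $\Fac N$ for $N$ indecomposable $\tau$-rigid is always of this form. These are precisely the structural results of \cite{DIJ-17}*{Section~4}, established there through the semidistributivity of the lattice of torsion classes and through $\tau$-tilting reduction; once they are in hand, the argument above is bookkeeping with Theorem~\ref{thm:air-torsion} and Proposition~\ref{pro:minmax}.
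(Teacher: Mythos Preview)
The paper does not prove Theorem~\ref{thm:dij-corr}; it is quoted without proof from \cite{DIJ-17}*{Section~4} as background for the main result, so there is no argument in the paper to compare your proposal against.

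As an independent matter, your sketch is broadly along the lines of the original argument in \cite{DIJ-17}, but as written it is essentially circular: at every substantive step --- complete join-irreducibility of $\Fac N$, the brick labelling of the unique cover, the bijection between left finite bricks and completely join-irreducible functorially finite torsion classes, and the fact that any brick $S'$ with $\langle S'\rangle_{\rm tors}=\Fac N$ lies in the wide subcategory $\mathcal W$ --- you explicitly defer to ``\cite{DIJ-17}*{Section~4}'', which is precisely the content of the theorem you are asked to prove. The bookkeeping you do supply (that $N\in\add\mathcal P(\Fac N)$, that only $N$ can index a downward arrow out of $\Fac N$, and the recovery of $N$ from $\Fac N$) is correct and useful, but if the goal is a self-contained proof rather than a guided reading of \cite{DIJ-17}, the deferred steps need to be filled in.
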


\begin{definition}[\cite{DIJ-17}*{Definition 1.1}]\label{def:tau-finite}
We say that $A$ is {\em $\tau$-tilting finite} if there are only finitely many basic $\tau$-tilting pairs in $\mod A$ up to isomorphism, equivalently, $\mod A$ has only finitely many functorially finite torsion classes.
\end{definition}

\begin{theorem}[\cite{DIJ-17}*{Theorem 4.2}]
\label{thm:DIJ-4.2}
Let $A$ be a finite dimensional algebra. The following statements are equivalent.
\begin{itemize}
    \item [(a)] The algebra $A$ is $\tau$-tilting finite;
    \item[(b)] The set of bricks in $\mod A$ are finite;
    \item[(c)] The set of left finit bricks in $\mod A$ are finite.
\end{itemize}
Moreover, if $A$ is $\tau$-tilting finite, then  all torsion classes of $\mod A$ are functorially finite and all bricks in $\mod A$ are left finite.
\end{theorem}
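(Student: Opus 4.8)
The plan is to split this statement into two parts: a block of formal bookkeeping that follows from the correspondences already recalled, and one genuinely structural input --- the ``moreover'' clause --- which I expect to be the main obstacle and would take from \cite{DIJ-17}.

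\emph{The equivalence $(a)\Leftrightarrow(c)$.} By Definition \ref{def:tau-finite}, $(a)$ is equivalent to the finiteness of the set of basic $\tau$-tilting pairs (equivalently, of functorially finite torsion classes, via Theorem \ref{thm:air-torsion}). I would then show that this is equivalent to the finiteness of the set of indecomposable $\tau$-rigid modules. One direction: every indecomposable $\tau$-rigid module $N$ gives a $\tau$-rigid pair $(N,0)$, and by Proposition \ref{pro:minmax}(i)--(ii) applied to $(N,0)$ together with Theorem \ref{thm:air-torsion} (Bongartz-type completion: $\Fac N\subseteq{}^\bot(\tau N)=\Fac M_0$ for the basic $\tau$-tilting pair $(M_0,P_0)$ realizing ${}^\bot(\tau N)$, forcing $(N,0)$ to be a summand of $(M_0,P_0)$), every such $N$ occurs as an indecomposable summand of the first component of some basic $\tau$-tilting pair; since each such pair has $|M|+|P|=|A|$ indecomposable summands, finitely many pairs yield finitely many indecomposable $\tau$-rigid modules. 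Conversely, a basic $\tau$-tilting pair $(M,P)$ is determined by its multiplicity-free collection of indecomposable summands, those of $M$ being indecomposable $\tau$-rigid modules and those of $P$ lying in the finite list $P_1,\dots,P_n$, so finitely many indecomposable $\tau$-rigid modules assemble into only finitely many basic $\tau$-tilting pairs. Finally, the brick--$\tau$-rigid correspondence of Theorem \ref{thm:dij-corr}(ii) is a bijection between indecomposable $\tau$-rigid modules and left finite bricks, so one of these sets is finite exactly when the other is. Concatenating the equivalences yields $(a)\Leftrightarrow(c)$.

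\emph{The structural input.} The substantive claim is that if $A$ is $\tau$-tilting finite then every torsion class of $\mod A$ is functorially finite; equivalently, the complete lattice $\mathsf{tors}\,A$ of all torsion classes (meets are intersections, joins are generated torsion classes) is finite with all members functorially finite. This is where $\tau$-tilting finiteness must be fed into a lattice-theoretic analysis of $\mathsf{tors}\,A$: one argues that if some torsion class $\mathcal T$ were not functorially finite, then one could extract infinitely many functorially finite torsion classes inside $\mathcal T$ --- starting, for instance, from the functorially finite torsion class $\Fac\,\mathcal P(\mathcal T)\subsetneq\mathcal T$ and enlarging it repeatedly using the completion of $\tau$-rigid pairs and the covering structure of the Hasse quiver of $\mathsf{tors}\,A$ --- contradicting $(a)$. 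I expect this to be the hard part, and I would invoke \cite{DIJ-17}, whose Theorem 4.2 this is, rather than reprove it.

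\emph{Closing the loop.} Granting the structural input, the remaining implications are formal. For any brick $S$ the torsion class $\langle S\rangle_{\rm tors}$ is functorially finite, so $S$ is left finite; hence the set of bricks coincides with the set of left finite bricks, which gives both the ``all bricks are left finite'' clause and the implication $(c)\Rightarrow(b)$ (these two sets being equal and, by the already-established $(a)\Leftrightarrow(c)$, finite). The implication $(b)\Rightarrow(c)$ is the trivial inclusion of left finite bricks into bricks. Together with $(a)\Leftrightarrow(c)$ this gives $(a)\Leftrightarrow(b)\Leftrightarrow(c)$, and the ``all torsion classes are functorially finite'' clause is precisely the structural input. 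The only step requiring work beyond the recalled results is that input, which is the anticipated main obstacle.
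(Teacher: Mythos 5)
This theorem is imported verbatim from \cite{DIJ-17} and the paper supplies no proof of its own, so the relevant check is whether your reductions are sound and the genuinely hard content is correctly isolated and attributed --- they are: your derivation of $(a)\Leftrightarrow(c)$ via Bongartz-type completion (Proposition \ref{pro:minmax} together with Theorem \ref{thm:air-torsion}) and the bijection of Theorem \ref{thm:dij-corr}, and your closing of the loop through the ``moreover'' clause, are correct, and deferring that clause (all torsion classes functorially finite under $\tau$-tilting finiteness) to \cite{DIJ-17} matches exactly what the paper does by citation. The only implicit step is $\Fac N\subseteq{}^{\bot}(\tau N)$ for a $\tau$-rigid module $N$, a standard fact contained in the cited results of \cite{air_2014}, so there is no genuine gap.
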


\section{Modules determined by their Newton polytopes}

For a module $X\in \mod A$ and a vector $\delta\in\mathbb R^n$, we write $\delta(X)\in\mathbb R$ for the inner product of $\delta$ and the dimensional vector of $X$.

\begin{lemma}\label{lem:delta}
 Denote by $$\overline{\mathcal T}_\delta=\{ N\in\mod A\mid \text{for any quotient module X of }N,\;\; \delta(X)\geq 0
     \}$$ for $\delta\in\mathbb R^n$.
 The following statements hold.
     \begin{itemize}
         \item [(i)] \cite{BKT-2014}*{Proposition 3.1} The subcategory $\overline{\mathcal T}_\delta$ is a torsion class in $\mod A$.
         \item[(ii)] \cite[Proposition 3.11]{asai-2021} Let $(M,P)$ be a basic $\tau$-tilting pair in $\mod A$ and $\delta=\delta_{(M,P)}$  the $\delta$-vector of $(M,P)$. Then $\Fac M=\overline{\mathcal T}_\delta$. 
     \end{itemize}
\end{lemma}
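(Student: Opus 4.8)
The plan is to treat the two parts separately: (i) is an elementary torsion-theoretic observation, while (ii) rests on the homological interpretation of $\delta$-vectors. Both are due to \cite{BKT-2014} and \cite{asai-2021} respectively; below I sketch how I would reconstruct them in the $\delta$-vector conventions of this paper.

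For (i), I would first note that $\overline{\mathcal T}_\delta$ is closed under quotients, since a quotient of a quotient of $N$ is again a quotient of $N$. For closure under extensions, take a short exact sequence $0\to N'\to N\to N''\to 0$ with $N',N''\in\overline{\mathcal T}_\delta$ and an arbitrary quotient $q\colon N\twoheadrightarrow X$. The image $X':=q(N')$ is a quotient of $N'$, and $X/X'$ is a quotient of $N''$; since $\underline{\dim}(X)=\underline{\dim}(X')+\underline{\dim}(X/X')$, we obtain $\delta(X)=\delta(X')+\delta(X/X')\geq 0$. Hence $N\in\overline{\mathcal T}_\delta$, so $\overline{\mathcal T}_\delta$ is a torsion class.

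For (ii), the key step is to establish the homological identity $\delta_Y(Z)=\dim_K\Hom_A(Y,Z)-\dim_K\Hom_A(Z,\tau Y)$ for all $Y,Z\in\mod A$. I would prove it by applying $\Hom_A(-,Z)$ to the minimal projective presentation $P^1\to P^0\to Y\to 0$: the exact sequence $0\to\Hom_A(Y,Z)\to\Hom_A(P^0,Z)\to\Hom_A(P^1,Z)$ yields $\langle\delta_Y,\underline{\dim}Z\rangle=\dim_K\Hom_A(P^0,Z)-\dim_K\Hom_A(P^1,Z)=\dim_K\Hom_A(Y,Z)-\dim_K\operatorname{coker}$, and I would identify this cokernel with $\Tr Y\otimes_A Z$, whose $K$-dual is $\Hom_A(Z,D\Tr Y)=\Hom_A(Z,\tau Y)$ by tensor--hom adjunction. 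Since $\tau P=0$ for a projective $P$, it follows that $\delta_{(M,P)}(Z)=\dim_K\Hom_A(M,Z)-\dim_K\Hom_A(Z,\tau M)-\dim_K\Hom_A(P,Z)$. I would also invoke the standard fact, obtainable from Proposition~\ref{pro:minmax}, that for a basic $\tau$-tilting pair $(M,P)$ one has $\Fac M={}^{\bot}(\tau M)\cap P^{\bot}$, a torsion class whose associated torsion-free class is $M^{\bot}=\{X\mid\Hom_A(M,X)=0\}$. Then for $N\in\Fac M$ every quotient $X$ of $N$ again lies in $\Fac M$, so $\Hom_A(X,\tau M)=0=\Hom_A(P,X)$ and hence $\delta_{(M,P)}(X)=\dim_K\Hom_A(M,X)\geq 0$; this gives $\Fac M\subseteq\overline{\mathcal T}_\delta$. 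Conversely, if $N\notin\Fac M$, then $X:=N/tN$ (where $tN\subseteq N$ is the torsion submodule for the pair $(\Fac M,M^{\bot})$) is a nonzero object of $M^{\bot}$, so $\Hom_A(M,X)=0$, while $X\notin{}^{\bot}(\tau M)\cap P^{\bot}$ since a torsion class meets its torsion-free class only in $0$; hence $\Hom_A(X,\tau M)\neq 0$ or $\Hom_A(P,X)\neq 0$, so $\delta_{(M,P)}(X)<0$, and this quotient of $N$ witnesses $N\notin\overline{\mathcal T}_\delta$. Thus $\overline{\mathcal T}_\delta\subseteq\Fac M$.

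I expect the main obstacle to be the homological identity for $\delta_Y(Z)$ — concretely the identification $\operatorname{coker}\bigl(\Hom_A(P^0,Z)\to\Hom_A(P^1,Z)\bigr)\cong D\Hom_A(Z,\tau Y)$ via the transpose functor and Auslander--Reiten duality. Once that is available, both inclusions in (ii) become short manipulations with the torsion pair $(\Fac M,M^{\bot})$, and (i) is routine.
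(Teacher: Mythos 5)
The paper itself offers no proof of Lemma~\ref{lem:delta}: both parts are quoted from the literature (\cite{BKT-2014}*{Proposition 3.1} and \cite{asai-2021}*{Proposition 3.11}), so there is nothing internal to compare against. Your reconstruction is, however, essentially the standard proof of both statements and I see no gap. Part (i) is exactly the usual argument: closure under quotients is immediate, and for extensions the additivity $\delta(X)=\delta(q(N'))+\delta(X/q(N'))$ with $q(N')$ a quotient of $N'$ and $X/q(N')$ a quotient of $N''$ does the job. Part (ii) correctly rests on the Auslander--Reiten formula $\delta_Y(Z)=\dim_K\Hom_A(Y,Z)-\dim_K\Hom_A(Z,\tau Y)$, obtained as you describe via $\operatorname{coker}\bigl(\Hom_A(P^0,Z)\to\Hom_A(P^1,Z)\bigr)\cong \Tr Y\otimes_A Z$ and the duality $D(\Tr Y\otimes_A Z)\cong\Hom_A(Z,\tau Y)$; minimality of the presentation is what makes $D\Tr Y=\tau Y$ appear, and you use it. The two inclusions are then handled correctly: $\Fac M\subseteq\overline{\mathcal T}_\delta$ because quotients of objects of $\Fac M$ stay in $\Fac M\subseteq{}^{\bot}(\tau M)\cap P^{\bot}$, and for the converse the quotient $N/tN$ with respect to the torsion pair $(\Fac M, M^{\bot})$ is a nonzero object with $\Hom_A(M,N/tN)=0$ lying outside ${}^{\bot}(\tau M)\cap P^{\bot}=\Fac M$, forcing $\delta(N/tN)<0$.

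Two small points you should make explicit if you write this up. First, the equality $\Fac M={}^{\bot}(\tau M)\cap P^{\bot}$ is indeed ``obtainable from Proposition~\ref{pro:minmax}'', but it needs one extra line: by Proposition~\ref{pro:minmax}~(i) the class ${}^{\bot}(\tau M)\cap P^{\bot}$ is a functorially finite torsion class, hence equals $\Fac M'$ for a basic $\tau$-tilting pair $(M',P')$ by Theorem~\ref{thm:air-torsion}~(ii); the chain $\Fac M\subseteq\Fac M'\subseteq{}^{\bot}(\tau M)\cap P^{\bot}$ and Proposition~\ref{pro:minmax}~(ii) force $(M,P)$ to be a direct summand of $(M',P')$, and since both are basic $\tau$-tilting pairs they coincide. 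Second, the identity $\delta_Y(Z)=\dim_K\Hom_A(Y,Z)-\dim_K\Hom_A(Z,\tau Y)$ literally uses the convention that the ``dimensional vector'' of $Z$ has entries $\dim_K Ze_i=\dim_K\Hom_A(P_i,Z)$; over a field that is not algebraically closed these differ from composition multiplicities by the positive factors $\dim_K\End_A(S_i)$, which rescales $\delta(X)$ by positive weights and therefore does not affect any of the sign arguments, but it is worth stating which convention is in force.
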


Torsion classes of the form $\overline{\mathcal T}_\delta$ are called {\em semistable torsion classes}. By Lemma \ref{lem:delta} (ii),  all functorially finite torsion classes are semistable torsion classes.

Recall that the {\em Newton polytope} $\mathcal N(U)$ of a module $U\in\mod A$ is  the convex hull in $\mathbb R^n$ spanned the dimensional vectors of quotient modules of $U$.

\begin{theorem}\label{thm:main}
Let $A$ be a finite dimensional basic algebra over a field $K$. Suppose that  $U$ and $V$ are two modules in $\mod A$ with the same Newton polytope. The following statements hold.

\begin{itemize}

    \item [(i)] For any semistable torsion class $\overline{\mathcal T}_\delta$ , $U\in\overline{\mathcal T}_\delta$ if and only if $V\in\overline{\mathcal T}_\delta$. In particular, for any $\tau$-tilting pair $(M,P)$, $U\in\Fac M$ if and only if $V\in \Fac M$.
   \item [(ii)] If $U$ and $V$ are left finite, then $\langle U\rangle_{\rm tors}=\langle V\rangle_{\rm tors}$.
    \item[(iii)] If  $U$ and $V$ are  left finite bricks, then $U\cong V$.
    \item[(iv)] If $U$ and $V$ are indecomposable $\tau$-rigid modules, then $ U\cong V$.
     \item[(v)] If $U$ and $V$ are  $\tau$-rigid modules, then $U\oplus V$ is $\tau$-rigid.
\end{itemize}
\end{theorem}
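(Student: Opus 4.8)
The plan is to derive everything from one elementary observation behind part~(i): for $\delta\in\mathbb R^n$ and $W\in\mod A$ I claim that $W\in\overline{\mathcal T}_\delta$ holds if and only if $\langle\delta,w\rangle\ge 0$ for every point $w\in\mathcal N(W)$. Indeed, by definition $W\in\overline{\mathcal T}_\delta$ requires $\delta(X)=\langle\delta,\underline{\dim}(X)\rangle\ge 0$ for each quotient module $X$ of $W$; since $\mathcal N(W)$ is the convex hull of exactly these vectors $\underline{\dim}(X)$ and $w\mapsto\langle\delta,w\rangle$ is linear, this is the same as requiring the inequality on all of $\mathcal N(W)$. As $\mathcal N(U)=\mathcal N(V)$, part~(i) is then immediate, and the ``in particular'' clause follows from Lemma~\ref{lem:delta}(ii), which exhibits $\Fac M$ as a semistable torsion class $\overline{\mathcal T}_\delta$.

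Next I would handle~(ii). Left finiteness makes $\langle U\rangle_{\rm tors}$ and $\langle V\rangle_{\rm tors}$ functorially finite, hence semistable torsion classes; writing $\langle U\rangle_{\rm tors}=\overline{\mathcal T}_\delta$ and applying~(i) to the membership $U\in\overline{\mathcal T}_\delta$ gives $V\in\overline{\mathcal T}_\delta=\langle U\rangle_{\rm tors}$, so $\langle V\rangle_{\rm tors}\subseteq\langle U\rangle_{\rm tors}$ by minimality of the latter; the reverse inclusion is symmetric.

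Parts~(iii), (iv), (v) use the brick--$\tau$-rigid correspondence together with a reduction in the $\tau$-rigid cases. For~(iv) and~(v) I would first record: \emph{if $U$ and $V$ are $\tau$-rigid with $\mathcal N(U)=\mathcal N(V)$, then $\Fac U=\Fac V$}. Indeed $\Fac U$ is a functorially finite torsion class (Proposition~\ref{pro:minmax}(i)), so $\Fac U=\Fac M$ for a basic $\tau$-tilting pair $(M,P)$ by Theorem~\ref{thm:air-torsion}; since $U\in\Fac M$, part~(i) gives $V\in\Fac M=\Fac U$, hence $\Fac V\subseteq\Fac U$, and symmetry finishes it. Now~(iv): $U$ and $V$ are indecomposable $\tau$-rigid with $\Fac U=\Fac V$, so by Theorem~\ref{thm:dij-corr}(i) the bricks $S_U,S_V$ attached to them satisfy $\langle S_U\rangle_{\rm tors}=\Fac U=\Fac V=\langle S_V\rangle_{\rm tors}$; the uniqueness clause of Theorem~\ref{thm:dij-corr}(i) then forces $S_U\cong S_V$, and the bijectivity in Theorem~\ref{thm:dij-corr}(ii) forces $U\cong V$. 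For~(iii) I would instead invoke~(ii): there $\langle U\rangle_{\rm tors}=\langle V\rangle_{\rm tors}$, and writing this common functorially finite torsion class as $\Fac N$ for the indecomposable $\tau$-rigid module $N$ that the correspondence attaches to the left finite brick $V$, the uniqueness of the brick $S$ with $\langle S\rangle_{\rm tors}=\Fac N$ identifies $U$ and $V$, both being bricks with $\langle\,\cdot\,\rangle_{\rm tors}=\Fac N$.

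Finally for~(v): the reduction gives $\Fac U=\Fac V=\Fac M$ for a basic $\tau$-tilting pair $(M,P)$. For any $\tau$-rigid $W$, a surjection $W^k\twoheadrightarrow X$ induces an injection $\Hom_A(X,\tau W)\hookrightarrow\Hom_A(W,\tau W)^k=0$, so $\Fac W\subseteq{}^{\bot}(\tau W)$; hence $\Fac U=\Fac M\subseteq{}^{\bot}(\tau U)$ and likewise $\Fac V=\Fac M\subseteq{}^{\bot}(\tau V)$, and Proposition~\ref{pro:minmax}(ii) (applied with trivial projective part, passing to basic representatives where it demands them) shows that $(U,0)$ and $(V,0)$ are both direct summands of $(M,P)$. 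Thus $U$ and $V$ are direct summands of the $\tau$-rigid module $M$, so $U\oplus V$ is $\tau$-rigid. The only genuinely new ingredient is the polytope observation of the first paragraph (a linear functional is minimized over a polytope at a vertex); everything else is bookkeeping with the $\tau$-tilting dictionary, and the step I expect to need the most care is invoking the uniqueness in Theorem~\ref{thm:dij-corr}(i) correctly in parts~(iii) and~(iv).
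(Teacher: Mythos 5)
Your proposal is correct and follows essentially the same route as the paper: part~(i) via the linearity/convex-hull observation together with Lemma~\ref{lem:delta}(ii), part~(ii) by realizing the functorially finite torsion class as $\Fac M$ and applying~(i), and parts~(iii)--(v) via Theorem~\ref{thm:dij-corr} and Proposition~\ref{pro:minmax}(ii). Your direct reduction $\Fac U=\Fac V$ in~(iv) and~(v) is just the paper's step~(ii) specialized to $\tau$-rigid modules, so there is no substantive difference.
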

\begin{proof}
(i) Suppose $U\in\overline{\mathcal T}_\delta$. Then for any quotient module $X$ of $U$, we have $\delta(X)\geq 0$. This implies for any vector $\alpha$ in the Newton polytope of $U$, the inner product $\langle\delta,\alpha \rangle$ is non-negative. Since $U$ and $V$ have the same Newton polytope, we know that for any quotient module $Y$ of $V$, we have $\delta(Y)\geq 0$. This implies $V\in \overline{\mathcal T}_\delta$.

Conversely, suppose $V\in \overline{\mathcal T}_\delta$ and by the same arguments, we can show $U\in\overline{\mathcal T}_\delta$. Hence,  $U\in\overline{\mathcal T}_\delta$ if and only if $V\in\overline{\mathcal T}_\delta$. Since all functorially finite torsion classes are semistable torsion classes, we obtain that $U\in\Fac M$ if and only if $V\in \Fac M$.

(ii) Since $U$ is left finite, the torsion class $\langle U\rangle_{\rm tors}$ is functorially finite.  Then by Theorem \ref{thm:air-torsion} (ii), there exists a $\tau$-tilting pair $(M,P)$ such that $\Fac M=\langle U\rangle_{\rm tors}$.
 Then by (i) and the fact $U\in \langle U\rangle_{\rm tors}=\Fac M$, we have $V\in \Fac M=\langle U\rangle_{\rm tors}$. Thus  $\langle V\rangle_{\rm tors}\subseteq \langle U\rangle_{\rm tors}$.

Applying the same arguments to $V$, we can show the converse inclusion  $\langle U\rangle_{\rm tors}\subseteq \langle V\rangle_{\rm tors}$. Thus $\langle U\rangle_{\rm tors}= \langle V\rangle_{\rm tors}$.

(iii) By (ii), we have $\langle U\rangle_{\rm tors}=\langle V\rangle_{\rm tors}$. Since $U$ is a left finite brick and by the brick-$\tau$-rigid correspondence in Theorem \ref{thm:dij-corr}, there exists an indecomposable $\tau$-rigid module $N$ such that $$\Fac N=\langle U\rangle_{\rm tors}=\langle V\rangle_{\rm tors}.$$ Then by the uniqueness of the brick in Theorem \ref{thm:dij-corr} (i), we have $U\cong V$.

(iv) Since $U$ and $V$ are indecomposable $\tau$-rigid modules, we know that they are left finite. By (ii), we have $$\Fac U=\langle U\rangle_{\rm tors}=\langle V\rangle_{\rm tors}=\Fac V.$$
Then by Theorem \ref{thm:dij-corr} (i), there exists a unique brick $S\in\mod A$ such that $\Fac U=\langle S\rangle_{\rm tors}=\Fac V$.
Then by the brick-$\tau$-rigid correspondence in Theorem \ref{thm:dij-corr}, we obtain $U\cong V$.

(v) Since $U$ and $V$ are $\tau$-rigid modules, we know that they are left finite. By (ii), we have $$\Fac U=\langle U\rangle_{\rm tors}=\langle V\rangle_{\rm tors}=\Fac V.$$ 
By Theorem \ref{thm:air-torsion} (ii), there exists a basic $\tau$-tilting pair $(M,P)$ such that 
$\Fac M=\Fac U=\Fac V$. Since $\Fac M=\Fac U\subseteq \prescript{\bot}{}({\tau U})$ and 
$\Fac M=\Fac V\subseteq \prescript{\bot}{}({\tau V})$ and 
by Proposition \ref{pro:minmax} (ii), we have $U\in\add M$ and $V\in\add M$. Thus $U\oplus V$ is $\tau$-rigid.
\end{proof}

\begin{corollary}\label{cor:main}
  Let $A$ be a $\tau$-tilting finite algebra. Then the bricks in $\mod A$ are uniquely determined by their Newton polytopes.
\end{corollary}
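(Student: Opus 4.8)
The plan is to derive Corollary \ref{cor:main} directly from Theorem \ref{thm:main}(iii) together with Theorem \ref{thm:DIJ-4.2}. Concretely, suppose $A$ is $\tau$-tilting finite and let $U, V \in \mod A$ be bricks with $\mathcal N(U) = \mathcal N(V)$. The first step is to invoke the last sentence of Theorem \ref{thm:DIJ-4.2}: since $A$ is $\tau$-tilting finite, every brick in $\mod A$ is left finite. Hence both $U$ and $V$ are \emph{left finite bricks}.

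The second step is then immediate: apply Theorem \ref{thm:main}(iii) to the pair $U, V$, which are left finite bricks with the same Newton polytope, to conclude $U \cong V$. This shows that the assignment sending a brick to its Newton polytope is injective on isomorphism classes, which is precisely the assertion that bricks in $\mod A$ are uniquely determined by their Newton polytopes.

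I do not anticipate any real obstacle here: the corollary is a formal consequence of two previously established results, and the only thing to check is that the hypothesis "$A$ is $\tau$-tilting finite" is exactly what upgrades an arbitrary brick to a left finite brick, which is the content of the "moreover" clause of Theorem \ref{thm:DIJ-4.2}. One could add a remark that, in fact, the same argument shows more — that any class of modules over a $\tau$-tilting finite algebra consisting of bricks inherits the separation property — but for the statement as given, the two-line deduction above suffices.

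\begin{proof}
Since $A$ is $\tau$-tilting finite, Theorem \ref{thm:DIJ-4.2} tells us that every brick in $\mod A$ is left finite. Now suppose $U$ and $V$ are bricks in $\mod A$ with $\mathcal N(U)=\mathcal N(V)$. Then $U$ and $V$ are left finite bricks with the same Newton polytope, so Theorem \ref{thm:main} (iii) gives $U\cong V$. Hence bricks in $\mod A$ are uniquely determined by their Newton polytopes.
\end{proof}
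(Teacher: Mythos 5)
Your proof is correct and follows exactly the paper's own argument: use the ``moreover'' clause of Theorem \ref{thm:DIJ-4.2} to upgrade bricks to left finite bricks, then apply Theorem \ref{thm:main} (iii).
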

\begin{proof}
    Since $A$ is $\tau$-tilting finite and by Theorem \ref{thm:DIJ-4.2}, all bricks in $\mod A$ are left finite. Then the result follows from Theorem \ref{thm:main} (iii).
\end{proof}

\bibliographystyle{alpha}
\bibliography{myref}

\end{document}